\newtheorem{theorem}{Theorem}
\newtheorem{corollary}[theorem]{Corollary}
\newtheorem{lemma}[theorem]{Lemma}
\newtheorem{proposition}[theorem]{Proposition}
\theoremstyle{definition}
\newtheorem{definition}[theorem]{Definition}
\numberwithin{equation}{section}
\newcommand{\tl}{\vartriangleleft}
\newcommand{\seq}{\mathsf{Seq}}
\newcommand{\rca}{\mathsf{RCA}}
\title[Higman's lemma is stronger for better quasi orders]{Higman's lemma is stronger\\ for better quasi orders}
\author{Anton Freund}
\address{Anton Freund, Department of Mathematics, Technical University of Darmstadt, Schloss\-garten\-str.~7, 64289~Darmstadt, Germany}
\email{freund@mathematik.tu-darmstadt.de}
\thanks{Funded by the Deutsche Forschungsgemeinschaft (DFG, German Research Foundation) -- Project number 460597863.}
\begin{document}

\begin{abstract}
We prove that Higman's lemma is strictly stronger for better quasi orders than for well quasi orders, within the framework of reverse mathematics. In fact, we show a stronger result: the infinite Ramsey theorem (for tuples of all lengths) follows from the statement that any array $[\mathbb N]^{n+1}\to\mathbb N^n\times X$ for a well order~$X$ and $n\in\mathbb N$ is good, over the base theory~$\mathsf{RCA_0}$.
\end{abstract}

\keywords{Higman's lemma, better quasi order, reverse mathematics, Nash-Williams' theorem}
\subjclass[2020]{06A07, 03B30, 03F15, 03F35}

\maketitle

\section{Introduction}

Let $\seq(Q)$ denote the collection of finite sequences in~$Q$. To refer to the entries and lengths of sequences, we stipulate that $\sigma\in\seq(Q)$ is equal to $\langle\sigma_0,\ldots,\sigma_{l(\sigma)-1}\rangle$. Where the context suggests it, we identify $n\in\mathbb N$ with $\{0,\ldots,n-1\}$. When~$Q$ is a quasi order, we define $Q^{<\omega}$ as the quasi order with underlying set $\seq(Q)$ and
\begin{equation*}
\sigma\leq\tau\quad\Leftrightarrow\quad\begin{cases}
\text{there is a strictly increasing $f:l(\sigma)\to l(\tau)$}\\
\text{with $\sigma_i\leq_Q\tau_{f(i)}$ for all $i<l(\sigma)$.}
\end{cases}
\end{equation*}
Higman's lemma~\cite{higman52} is the statement that $Q\mapsto Q^{<\omega}$ preserves well quasi orders.

As finite sequences in~$Q$ correspond to functions~$n\to Q$, a natural generalization leads to transfinite sequences with ordinal numbers as lengths. The collection of transfinite sequences in~$Q$ need not be a well quasi order when~$Q$ is one (see the counterexample due to R.~Rado~\cite{rado-counterexample}). To secure closure properties under infinitary constructions, C.~Nash-Williams has introduced the more restrictive notion of better quasi order~\cite{nash-williams-bqo}. We refer to~\cite{marcone-survey-old} for an introduction that uses the same notation as the present paper. Parts of the definition will also be recalled in the next section. By Nash-Williams' theorem we shall mean the statement that the collection of transfinite sequences in~$Q$ is a better quasi order whenever the same holds for~$Q$ (which is proved in~\cite{nash-williams-bqo}). The statement that $Q\mapsto Q^{<\omega}$ preserves better quasi orders is known as the generalized Higman lemma.

Reverse mathematics is a research program in logic, which aims to determine the minimal axioms that are needed to prove given theorems from various areas of mathematics (see the paper by H.~Friedman~\cite{friedman-rm} and the textbook by S.~Simpson~\cite{simpson09}). A classical result states that Higman's lemma is equivalent to an abstract set existence principle known as arithmetical comprehension, over the weak base theory~$\rca_0$ (see~\cite[Theorem~X.3.22]{simpson09}). Question~24 from a well-known list of A.~Montalb\'an~\cite{montalban-open-problems} asks about the precise strength of Nash-Williams' theorem. The latter is known to imply the principle of arithmetical transfinite recursion (which is considerably stronger than arithmetical comprehension), by a result of R.~Shore~\cite{shore-comp-wos} (see also~\cite{marcone-bad-sequence}).

Conversely, A.~Marcone~\cite{marcone-bad-sequence} has shown that arithmetical transfinite recursion suffices to reduce Nash-Williams' theorem to the generalized Higman lemma. It~remains open whether the latter can be proved by arithmetical transfinite recursion. A proof in $\mathsf{ACA}_0$ (the extension of $\rca_0$ by arithmetical comprehension) had been suggested by P.~Clote~\cite{clote-fraisse}, but according to Marcone it could not be substantiated (see the paragraph after Conjecture~5.6 in~\cite{marcone-bad-sequence}). In the present paper, we show that no such proof can exist: Arithmetical comprehension is known to be strictly weaker than the infinite Ramsey theorem for tuples of all lengths. We will prove that the latter follows from the statement that $\mathbb N^n\times X$ is a better quasi order for any well order~$X$ and all $n\in\mathbb N$ (even when only the barrier $[\mathbb N]^{n+1}$ is considered). This statement is a consequence of the generalized Higman lemma.

The idea of our proof is to iterate an argument due to Marcone, which shows that arithmetical comprehension follows when the better quasi orders are closed under binary products (see Theorem~5.10 and Lemma~5.17 of~\cite{marcone-survey-old}). We do not obtain a new upper bound on the strength of the generalized Higman lemma. The significance of our lower bound is heightened by the fact that many fundamental questions in the reverse mathematics of better quasi orders are wide open~\cite{marcone-survey-new,montalban-open-problems}.

\section{Well foundedness proofs via better quasi orders}

To connect the generalized Higman lemma and the infinite Ramsey theorem, we will use the following transformations of linear orders. The definition employs notation for sequences that is explained at the beginning of the previous section.

\begin{definition}
For a linear order~$X$ and finite sequences $\alpha,\beta\in\seq(X)$, we put
\begin{equation*}
j(\alpha,\beta):=\min\left(\left\{\left.j<\min\big(l(\alpha),l(\beta)\big)\,\right|\,\alpha_j\neq\beta_j\right\}\cup\left\{\min\big(l(\alpha),l(\beta)\big)\right\}\right).
\end{equation*}
On~$\seq(X)$ we consider the lexicographic comparisons given by
\begin{equation*}
\alpha\prec\beta\quad\Leftrightarrow\quad\begin{cases}
\text{either }j:=j(\alpha,\beta)<\min\big(l(\alpha),l(\beta)\big)\text{ and }\alpha_j<_X\beta_j,\\
\text{or }j(\alpha,\beta)=l(\alpha)<l(\beta).
\end{cases}
\end{equation*}
Let $\omega(X)$ be the linear order with lexicographic comparisons and underlying set
\begin{equation*}
\omega(X):=\{\alpha\in\seq(X)\,|\,\alpha_{l(\alpha)-1}\leq_X\ldots\leq_X\alpha_0\}.
\end{equation*}
Finally, we define iterations by stipulating $\omega^X_0:=X$ and $\omega^X_{n+1}:=\omega\left(\omega^X_n\right)$.
\end{definition}

It may help to think of $\alpha\in\omega(X)$ as the Cantor normal form~$\omega^{\alpha_0}+\ldots+\omega^{\alpha_{l(\alpha)-1}}$. Within $\rca_0$, the official definition of~$\omega^X_n$ does not proceed by recursion on~$n\in\mathbb N$. Instead, one first observes that iterated applications of~$\seq$ yield `balanced' trees or terms with leaf labels or constant symbols from~$X$. The relation $\prec$ between trees of height~$n$ and the set of trees in~$\omega^X_n$ can then be determined by primitive recursion over the number of vertices, for all~$n\in\mathbb N$ simultaneously. The following are equivalent over the base theory~$\rca_0$, by results of A.~Marcone and A.~Montalb\'an~\cite{marcone-montalban} as well as C.~Jockusch~\cite{jockusch-ramsey} and K.~McAloon~\cite{mcaloon-ramsey}:
\begin{enumerate}[label=(\roman*)]
\item if $X$ is a well order, then so is $\omega^X_n$ for every~$n\in\mathbb N$,
\item for all~$n\in\mathbb N$, the $n$-th Turing jump of any set exists,
\item the infinite Ramsey theorem holds for tuples of any length.
\end{enumerate}
It is straightforward to conclude that each of these statements is strictly stronger than arithmetical comprehension (see, e.\,g.,~\cite[Section~4]{rathjen-afshari-ramsey}). We will show that~(i) follows from (a weaker statement than) the generalized Higman lemma.

Given a linear order~$Y$, we write $\omega+Y$ for the linear order with underlying set
\begin{equation*}
\omega+Y:=\{(0,n)\,|\,n\in\mathbb N\}\cup\{(1,y)\,|\,y\in Y\}
\end{equation*}
and $(0,n)\leq(0,n')<(1,y)\leq(1,y')$ for $n\leq n'$ in~$\mathbb N$ and $y\leq y'$ in~$Y$. When~$X$ is (isomorphic to) an order of the form $\omega+Y$, we have an element $0:=(0,0)\in X$ and a strictly increasing map $X\ni x\mapsto 1+x\in X$ that is given by $1+(0,n):=(0,1+n)$ and $1+(1,y):=(1,y)$, which yields $0<1+x$ for any~$x\in X$. The next definition and lemma provide a convenient characterization of the order from above.

\begin{definition}
Assume~$X$ has the form $\omega+Y$. For $\alpha\in\omega(X)$ and $j\in\mathbb N$, we put
\begin{equation*}
\overline\alpha_j:=\begin{cases}
1+\alpha_j & \text{if }j<l(\alpha),\\
0 & \text{otherwise}.
\end{cases}
\end{equation*}
Given $\alpha,\beta\in\omega(X)$, we then set $c(\alpha,\beta):=\overline\alpha_j\in X$ with $j:=j(\alpha,\beta)$.
\end{definition}

Let us record the following basic facts.

\begin{lemma}\label{lem:ineq}
Consider $\alpha,\beta,\gamma\in\omega(X)$ with $X$ of the form~$\omega+Y$. We have
\begin{equation*}
\alpha\prec\beta\quad\Leftrightarrow\quad\overline\alpha_j<_X \overline\beta_j\text{ with }j:=j(\alpha,\beta).
\end{equation*}
When we have $\alpha\succ\beta$ and $j(\alpha,\beta)\leq j(\beta,\gamma)$, we get $c(\alpha,\beta)>c(\beta,\gamma)$ in~$X$.
\end{lemma}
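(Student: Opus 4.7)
The plan is to derive both statements by unpacking the definitions of $j(\alpha,\beta)$, $\overline\alpha_j$, and the lexicographic order~$\prec$. For the first equivalence, I set $j:=j(\alpha,\beta)$ and split on whether $j<\min(l(\alpha),l(\beta))$ or $j=\min(l(\alpha),l(\beta))$. In the former case the minimality in the definition of $j(\alpha,\beta)$ gives $\alpha_j\neq\beta_j$ together with $j<l(\alpha),l(\beta)$, so $\overline\alpha_j=1+\alpha_j$ and $\overline\beta_j=1+\beta_j$; since $x\mapsto 1+x$ is strictly increasing on~$X$, the inequality $\overline\alpha_j<_X\overline\beta_j$ is equivalent to $\alpha_j<_X\beta_j$, which is exactly the first clause defining $\alpha\prec\beta$. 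In the latter case I distinguish three subcases by comparing $l(\alpha)$ and $l(\beta)$: if $l(\alpha)<l(\beta)$, then $\overline\alpha_j=0<_X 1+\beta_j=\overline\beta_j$ and the second clause of~$\prec$ applies, so both sides hold; if $l(\alpha)>l(\beta)$, then $\overline\alpha_j=1+\alpha_j>_X 0=\overline\beta_j$ and neither clause of~$\prec$ applies, so both sides fail; if $l(\alpha)=l(\beta)$, then $\alpha=\beta$ and both sides are trivially false.

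For the second part I apply the equivalence just proved to $\alpha\succ\beta$ in order to obtain $\overline\alpha_j>_X\overline\beta_j$ with $j:=j(\alpha,\beta)$, and I set $k:=j(\beta,\gamma)$. If $j=k$, then $c(\alpha,\beta)=\overline\alpha_j>_X\overline\beta_j=\overline\beta_k=c(\beta,\gamma)$ and we are done. If $j<k$, the key observation is that elements of~$\omega(X)$ have weakly decreasing coordinates, which together with $k\leq\min(l(\beta),l(\gamma))\leq l(\beta)$ controls $\overline\beta_k$: when $k<l(\beta)$ one has $\beta_j\geq_X\beta_k$ and hence $\overline\beta_j\geq_X\overline\beta_k$; when $k=l(\beta)$ one has $\overline\beta_k=0$ while $\overline\beta_j=1+\beta_j>_X 0$ since $j<k=l(\beta)$. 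In either subcase, chaining with the strict inequality $\overline\alpha_j>_X\overline\beta_j$ yields $c(\alpha,\beta)>_X c(\beta,\gamma)$.

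I do not expect any real obstacle: both parts reduce to case analyses that read off the definitions. The only point requiring some care is the handling of indices at the boundary $l(\beta)$ in the second part, where the assumption that sequences in~$\omega(X)$ are weakly decreasing, together with the use of the shift $x\mapsto 1+x$ to encode the `past end of sequence' case as~$0$, is precisely what allows the argument to proceed uniformly across the different positions of $j$ and $k$ relative to $l(\beta)$.
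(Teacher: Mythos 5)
Your proof is correct and follows essentially the same route as the paper: the equivalence is verified by a case distinction on whether $j(\alpha,\beta)$ is strictly below or equal to $\min(l(\alpha),l(\beta))$ (and on how $l(\alpha)$ and $l(\beta)$ compare), and the second claim is obtained by combining the equivalence applied to $\alpha\succ\beta$ with the fact that $j\mapsto\overline\beta_j$ is weakly decreasing because $\beta\in\omega(X)$. Your write-up is just a more detailed version of the paper's compressed argument; no gaps.
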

\begin{proof}
The equivalence is checked by a case distinction between strict inequalities and equalities in $j(\alpha,\beta)\leq l(\alpha)$ and $j(\alpha,\beta)\leq l(\beta)$. To verify the remaining claim, put $i:=j(\alpha,\beta)=j(\beta,\alpha)$ and $j:=j(\beta,\gamma)$. Given $\alpha\succ\beta$, we get $c(\alpha,\beta)=\overline\alpha_i>\overline\beta_i$ by the equivalence. Due to $\beta\in\omega(X)$ and $i\leq j$, we also have $\overline\beta_i\geq\overline\beta_j=c(\beta,\gamma)$.
\end{proof}

When~$X$ has the form~$\omega+Y$, so has $\omega^X_n$ for all numbers $n\in\mathbb N$. To confirm this for~$n=m+1$, we note that $\omega^X_m$ contains a minimal element, which we denote by~$0$. If we have $m=0$ and thus $\omega^X_m=X$, this element is given as above, while $m=k+1$ leads to $0=\langle\rangle\in\omega(\omega^X_k)=\omega^X_m$. Now the elements $\langle 0,\ldots,0\rangle\in\omega(\omega^X_m)=\omega^X_n$ form an initial segment isomorphic to~$\mathbb N$. We can conclude that the previous considerations apply with~$\omega^X_n$ at the place of~$X$. In particular, we obtain elements $j(\alpha,\beta)\in\mathbb N$ and $c(\alpha,\beta)\in\omega^X_n$ for any $\alpha,\beta\in\omega^X_{n+1}=\omega(\omega^X_n)$.

Let $[\mathbb N]^n$ be the set of strictly increasing sequences $s\in\seq(\mathbb N)$ of length $l(s)=n$. Whenever we use this notation, we assume $n>0$. For $s,t\in[\mathbb N]^n$ we declare
\begin{equation*}
s\tl t\quad:\Leftrightarrow\quad s_0<t_0\text{ and }s_{i+1}=t_i\text{ for all }i<n-1.
\end{equation*}
If we have $n>1$, the second conjunct on the right does already entail $s_0<s_1=t_0$. For $n=1$, the condition $s_0<t_0$ allows us to identify $([\mathbb N]^1,\tl)$ with the isomorphic structure~$(\mathbb N,<)$. Given a quasi order~$Q$, a map $f:[\mathbb N]^n\to Q$ is called good if there are $s\tl t$ with $f(s)\leq_Q f(t)$. Otherwise it is called bad. The structures $([\mathbb N]^n,\tl)$ are examples for the notion of barrier that appears in the definition of better quasi orders (see, e.\,g.,~\cite{marcone-survey-old}). To follow the present paper, it suffices to know that if $Q$ is a better quasi order, then any $f:[\mathbb N]^n\to Q$ is good. We note that $Q$ is a well quasi order precisely when this holds for~$n=1$. Over $\rca_0$, any map $f:[\mathbb N]^n\to Q$ into a well order~$Q$ is good (consider $s^k\tl s^{k+1}$ with $s^k_i:=k+i$ and exploit linearity).

Given $s\tl t$ in $[\mathbb N]^n$, it is standard to define $s\cup t\in[\mathbb N]^{n+1}$ as the sequence with
\begin{equation*}
(s\cup t)_i:=\begin{cases}
s_0 & \text{when }i=0,\\
s_i=t_{i-1} & \text{when }0<i<n,\\
t_{n-1} & \text{when }i=n.
\end{cases}
\end{equation*}
Note that $s\cup t$ is strictly increasing, which relies on~$s_0<t_0$ when we have $n=1$. Any element of $[\mathbb N]^{n+1}$ can be uniquely written as $s\cup t$ with $s\tl t$ in~$[\mathbb N]^n$. When~we use the notation $s\cup t$, we always assume $s\tl t$. Let us observe that $r\cup s\tl s'\cup t$ in~$[\mathbb N]^{n+1}$ entails that we have~$s=s'$.

To show that there can be no strictly decreasing sequence $f:\mathbb N\to\omega^X_n$, we now construct maps~$f_k$ with increasingly complex domain but ever simpler codomain. For $\sigma=\langle\sigma_0,\ldots,\sigma_{k-1}\rangle\in\mathbb N^k$ and $j\in\mathbb N$ we write $\sigma^\frown j:=\langle\sigma_0,\ldots,\sigma_{k-1},j\rangle\in\mathbb N^{k+1}$.

\begin{definition}\label{def:f_k}
Consider $f:\mathbb N\to\omega^X_n$ for~$X$ of the form~$\omega+Y$. To define
\begin{equation*}
f_k:[\mathbb N]^{k+1}\to\mathbb N^k\times\omega^X_{n-k}
\end{equation*}
by recursion on~$k\leq n$, we stipulate $f_k(r):=\langle f^0_k(r),f^1_k(r)\rangle$ with
\begin{align*}
f^0_0(\langle i\rangle)&:=\langle\rangle\in\mathbb N^0,\qquad &f^0_{k+1}(s\cup t)&:=f^0_k(s)\,^\frown\,j\big(f^1_k(s),f^1_k(t)\big),\\
f^1_0(\langle i\rangle)&:=f(i),\qquad &f^1_{k+1}(s\cup t)&:=c\big(f^1_k(s),f^1_k(t)\big).
\end{align*}
\end{definition}

The family of functions~$f_k$ can be seen as a single function on sequences, as $k$ is determined by the length of the argument. This single function can be constructed by a recursion over subsequences, which is available in the base theory~$\rca_0$.

Let us declare that $(\sigma_0,\ldots,\sigma_{k-1},\alpha)\leq(\tau_0,\ldots,\tau_{k-1},\beta)$ holds in~$\mathbb N^k\times\omega^X_{n-k}$ if we have $\alpha\preceq\beta$ in $\omega^X_{n-k}$ as well as $\sigma_i\leq\tau_i$ in~$\mathbb N$ for all~$i<k$. The following observation is the crucial step in our argument.

\begin{proposition}
If $f_k$ is bad, then so is $f_{k+1}$ (in the situation of Definition~\ref{def:f_k}).
\end{proposition}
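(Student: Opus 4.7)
The plan is to argue contrapositively: assuming that $f_{k+1}$ is good, I will produce $s\tl t$ in $[\mathbb N]^{k+1}$ with $f_k(s)\leq f_k(t)$. A witness to goodness of $f_{k+1}$ is a pair $u\tl v$ in $[\mathbb N]^{k+2}$ with $f_{k+1}(u)\leq f_{k+1}(v)$. Writing $u=s\cup t$ and $v=s'\cup t'$ with $s\tl t$ and $s'\tl t'$ in $[\mathbb N]^{k+1}$, the observation recorded just before Definition~\ref{def:f_k} forces $s'=t$, so the configuration is a chain $s\tl t\tl t'$ in $[\mathbb N]^{k+1}$.

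Next, I unfold the inequality $f_{k+1}(s\cup t)\leq f_{k+1}(t\cup t')$ in the product order on $\mathbb N^{k+1}\times\omega^X_{n-k-1}$ via the recursive clauses of Definition~\ref{def:f_k}. Writing $\alpha:=f^1_k(s)$, $\beta:=f^1_k(t)$ and $\gamma:=f^1_k(t')$ for brevity, the inequality splits into three ingredients: (i) $f^0_k(s)\leq f^0_k(t)$ coordinatewise in $\mathbb N^k$, (ii) $j(\alpha,\beta)\leq j(\beta,\gamma)$ in $\mathbb N$, and (iii) $c(\alpha,\beta)\preceq c(\beta,\gamma)$ in $\omega^X_{n-k-1}$.

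To conclude, I distinguish two cases on the relation between $\alpha$ and $\beta$ in the linear order $\omega^X_{n-k}$. If $\alpha\preceq\beta$, then combined with~(i) this already gives $f_k(s)\leq f_k(t)$ at the pair $s\tl t$, witnessing goodness of $f_k$. Otherwise $\alpha\succ\beta$; since $\omega^X_{n-k}$ has the form $\omega+Y$ (as noted in the paragraph after Lemma~\ref{lem:ineq}), ingredient~(ii) places us exactly in the hypothesis of the second assertion of Lemma~\ref{lem:ineq}, yielding $c(\alpha,\beta)>c(\beta,\gamma)$ and contradicting~(iii). Hence the first case must occur. The only real obstacle is bookkeeping: one must verify that the recursive definition slots the values $f^1_k(s), f^1_k(t), f^1_k(t')$ into precisely the roles of $\alpha,\beta,\gamma$ in Lemma~\ref{lem:ineq}, and that the $j$-comparison needed for that lemma is supplied by the extra last coordinate of $f^0_{k+1}$ rather than by the comparison in $\omega^X_{n-k-1}$.
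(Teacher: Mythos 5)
Your proof is correct and is essentially the paper's own argument, merely phrased contrapositively: the paper assumes $f_k$ bad and a good pair $r\cup s\tl s\cup t$ for $f_{k+1}$, uses linearity of $\omega^X_{n-k}$ to force $f^1_k(r)\succ f^1_k(s)$, and derives the same contradiction between the second part of Lemma~\ref{lem:ineq} and your ingredient~(iii). One cosmetic slip: the hypothesis of that lemma concerns the order underlying $\omega(\cdot)\ni\alpha,\beta,\gamma$, so what you need to be of the form $\omega+Y$ is $\omega^X_{n-k-1}$ rather than $\omega^X_{n-k}$ (both facts hold by the paragraph you cite, so nothing breaks).
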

\begin{proof}
Aiming at a contradiction, we assume
\begin{equation}\label{eq:contradict}\tag{$\star$}
f_{k+1}(r\cup s)\leq f_{k+1}(s\cup t)
\end{equation}
with $r\cup s\tl s\cup t$ in $[\mathbb N]^{k+2}$. From~(\ref{eq:contradict}) we can, first, conclude that $f^0_k(r)\leq f^0_k(s)$ holds in $\mathbb N^k$ (i.\,e., componentwise). Given that $f_k$ is bad and that we have $r\tl s$, we must thus have $f^1_k(r)\succ f^1_k(s)$ in $\omega^X_{n-k}=\omega(\omega^X_{n-(k+1)})$. Now~(\ref{eq:contradict}) does, secondly, entail~$j(f^1_k(r),f^1_k(s))\leq j(f^1_k(s),f^1_k(t))$. We obtain $c(f^1_k(r),f^1_k(s))\succ c(f^1_k(s),f^1_k(t))$ due to Lemma~\ref{lem:ineq}. But by~(\ref{eq:contradict}) we do, finally, get the converse inequality as well.
\end{proof}

In the following result, one may take~$X=Z$ when~$Z$ itself has an initial segment that is isomorphic to~$\mathbb N$. Otherwise, we can always put $X:=\omega+Z$, which is a well order whenever the same holds for~$Z$, provably in~$\rca_0$.

\begin{theorem}[{$\rca_0$}]
Consider a linear order~$Z$ with an embedding into a linear order~$X$ that has the form~$\omega+Y$. If all maps $[\mathbb N]^{n+1}\to\mathbb N^n\times X$ are good, then $\omega^Z_n$ is a well order. In particular, this follows when $\mathbb N^n\times X$ is a better quasi order.
\end{theorem}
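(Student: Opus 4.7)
The plan is by contraposition. Assume $\omega^Z_n$ is not a well order, so there exists a strictly descending sequence $h:\mathbb N\to\omega^Z_n$. I aim to convert $h$ into a bad map $[\mathbb N]^{n+1}\to\mathbb N^n\times X$, contradicting the hypothesis.

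The first step is to lift the given embedding $Z\hookrightarrow X$ to an embedding $e:\omega^Z_n\hookrightarrow\omega^X_n$. I would do this by induction on $n$: given an order embedding at level $k$, apply it entry-wise to each sequence in $\omega^Z_{k+1}=\omega(\omega^Z_k)$. Entry-wise application preserves the weakly descending-entries condition defining $\omega(\cdot)$ and commutes with the lexicographic comparisons $\prec$ by inspection of their definitions. As with the construction of $\omega^X_n$ itself, this is uniformly primitive recursive on the common tree representation, so the whole family of embeddings is available in $\rca_0$. Setting $f:=e\circ h$ then yields a strictly descending map $f:\mathbb N\to\omega^X_n$.

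With this $f$ in hand, invoke Definition~\ref{def:f_k} to obtain the family $f_0,\ldots,f_n$ with $f_k:[\mathbb N]^{k+1}\to\mathbb N^k\times\omega^X_{n-k}$. The base case $f_0$ is bad: for $\langle i\rangle\tl\langle i'\rangle$ in $[\mathbb N]^1$, that is $i<i'$, strict descent of $f$ forces $f(i)\succ f(i')$, hence $f_0(\langle i\rangle)\not\leq f_0(\langle i'\rangle)$. A $\Pi^0_1$-induction on $k\leq n$, with inductive step supplied by the preceding Proposition, then shows that $f_n:[\mathbb N]^{n+1}\to\mathbb N^n\times\omega^X_0=\mathbb N^n\times X$ is bad, contradicting the hypothesis. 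The ``in particular'' clause is immediate: if $\mathbb N^n\times X$ is a better quasi order, then every map from the barrier $[\mathbb N]^{n+1}$ into it is good, as recalled just before Definition~\ref{def:f_k}. I do not anticipate any genuine obstacle; the only points requiring mild care are the uniformity of the lifted embedding and of the iterated Proposition, both of which ride on the tree representation that the paper has already set up.
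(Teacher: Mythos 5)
Your proposal is correct and follows essentially the same route as the paper: reduce to $\omega^X_n$ via an entry-wise lifted embedding, form the $f_k$ of Definition~\ref{def:f_k} from a hypothetical descending sequence, and propagate badness from $f_0$ to $f_n$ by $\Pi^0_1$-induction using the Proposition. The only cosmetic difference is that you compose the embedding with the descending sequence rather than first arguing that it suffices to show $\omega^X_n$ is a well order; these are the same argument.
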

\begin{proof}
One readily constructs an embedding of~$\omega^Z_n$ into~$\omega^X_n$. So it suffices to show that the latter is a well order. Towards a contradiction, we assume that $f:\mathbb N\to\omega^X_n$ is strictly decreasing. Let $f_k$ for~$k\leq n$ be given as in Definition~\ref{def:f_k}. The map~$f_0$ is bad by our assumption on~$f$. In view of the previous proposition, we can use induction to conclude that $f_n:[\mathbb N]^{n+1}\to\mathbb N^n\times X$ is bad, against the assumption of the theorem. Concerning formalization in~$\rca_0$, we note that the induction~statement is~$\Pi^0_1$ (and an even simpler induction over subsequences would also be possible).
\end{proof}

Finally, we draw the promised conclusions:

\begin{corollary}
Over~$\rca_0$, the infinite Ramsey theorem (for tuples of all lengths) follows from the statement that $\mathbb N^n\times X$ is a better quasi order for every well order~$X$ and all~$n\in\mathbb N$. In particular, it follows from the generalized Higman lemma, so that the latter cannot be proved in $\mathsf{ACA}_0$.
\end{corollary}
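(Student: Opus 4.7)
The first part of the corollary is a direct application of the preceding Theorem. Given a well order~$Z$ and $n\in\mathbb N$, the plan is to set $X:=\omega+Z$, which the paper has already observed to be a well order of the required form in~$\rca_0$, with an evident embedding of~$Z$. The hypothesis that $\mathbb N^n\times X$ is a better quasi order entails, by restriction of the bqo property to the barrier~$[\mathbb N]^{n+1}$, precisely what the Theorem demands: every map $[\mathbb N]^{n+1}\to\mathbb N^n\times X$ is good. The Theorem then yields that $\omega^Z_n$ is a well order. Varying $Z$ and $n$ establishes statement~(i) from the list of equivalences recalled in Section~2, so the cited result (i)~$\Leftrightarrow$~(iii) of Marcone--Montalb\'an, Jockusch, and McAloon delivers the infinite Ramsey theorem for tuples of all lengths.

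For the ``in particular'' clause, I would check that the generalized Higman lemma implies the hypothesis just used. Every well order is a bqo, so the ordered sum $1+X$ (obtained by prepending a fresh minimum~$\bot$ to~$X$) is a bqo, and the generalized Higman lemma makes $(1+X)^{<\omega}$ a bqo. The assignment $(k,x)\mapsto\langle\bot,\ldots,\bot,x\rangle$ with $k$ copies of~$\bot$ is an order-embedding of $\mathbb N\times X$ into $(1+X)^{<\omega}$: preservation is immediate, and for reflection one uses that $x\in X$ is strictly above~$\bot$ to force the last coordinate of the source to correspond to the last coordinate of the target. Hence $\mathbb N\times X$ inherits the bqo property, and iterating the embedding (or appealing to the standard closure of bqo under finite products) gives that $\mathbb N^n\times X$ is a bqo for every well order~$X$ and each~$n$, closing the reduction.

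The principal observation is that the Theorem has already absorbed the technical content of the iterated Marcone-style construction, so the corollary is a matter of routine unpacking together with one standard bqo-closure argument; no serious obstacle remains. The concluding sentence of the corollary then follows because the infinite Ramsey theorem is strictly stronger than arithmetical comprehension, as recalled just after the list of equivalences in Section~2.
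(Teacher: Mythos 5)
Your first paragraph is correct and is exactly the paper's argument: restrict the bqo hypothesis to the barrier $[\mathbb N]^{n+1}$, feed it into the Theorem with $X:=\omega+Z$, and invoke the equivalence (i)$\Leftrightarrow$(iii) from the start of Section~2.

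The reduction to the generalized Higman lemma, however, has a gap in its second half. The embedding $(k,x)\mapsto\langle\bot,\ldots,\bot,x\rangle$ of $\mathbb N\times X$ into $(1+X)^{<\omega}$ is fine, but it only covers $n=1$. To reach $\mathbb N^n\times X$ you propose to \emph{iterate} the construction or to invoke \emph{closure of bqo under finite products}, and neither step is available over $\rca_0$ as stated. Iterating means an induction on~$n$ whose induction hypothesis is ``$\mathbb N^k\times X$ is a better quasi order''; this is a statement with a universal set quantifier over all barriers and arrays, far beyond the $\Sigma^0_1$ induction that $\rca_0$ provides (note that the $n$ in the corollary is quantified internally, so one cannot simply unwind for a fixed standard $n$). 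The fallback is worse: closure of better quasi orders under binary products is precisely the principle that Marcone showed to imply arithmetical comprehension (as recalled in the introduction), so it is not a ``standard'' fact one may cite in the base theory; deriving it from the generalized Higman lemma, e.g.\ via $(Q\sqcup R)^{<\omega}$, again needs to split a bad array over a disjoint sum, which requires a Ramsey-type partition property not available in $\rca_0$. The paper sidesteps all of this with a single application of the generalized Higman lemma: take $Y:=\omega+X$, which is a \emph{well order} and hence a bqo provably in $\rca_0$; then $Y^{<\omega}$ is a bqo, and since two sequences of the same length $n+1$ are comparable in $Y^{<\omega}$ only via the identity map, $Y^{n+1}$ sits inside $Y^{<\omega}$ as a sub-quasi-order with the componentwise order, and $\mathbb N^n\times X$ embeds into $Y^{n+1}$ because both $\mathbb N$ and $X$ embed into $Y$. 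No induction on $n$ at the level of bqo statements and no product-closure principle is needed. I recommend replacing your iteration by this one-shot embedding.
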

\begin{proof}
The first claim follows by the discussion at the beginning of this section. To reduce to the generalized Higman lemma, consider an arbitrary well order~$X$. Let~$Y$ be the order~$\omega+X$ or some other well order into which $\mathbb N$ and $X$ embed. We note that $Y$ is a better quasi order, provably in~$\rca_0$ (see above or~\cite[Lemma~3.1]{marcone-survey-old}). By the generalized Higman lemma, it follows that $Y^{<\omega}$ is a better quasi order. But the latter embeds $Y^{n+1}$ and hence $\mathbb N^n\times X$ for any~$n\in\mathbb N$.
\end{proof}

\bibliographystyle{amsplain}
\bibliography{Generalized-Higman}

\end{document}